
\documentclass{amsart}

   \usepackage[
    top    = 2.0cm,
    bottom = 1.95cm,
    left   = 2cm,
    right  = 2cm]    {geometry}

\usepackage{amssymb}
\usepackage{stmaryrd}
\usepackage{fullpage}
\usepackage[colorlinks]{hyperref}

\newtheorem{theorem}{Theorem}[section]
\newtheorem{lemma}[theorem]{Lemma}
\newtheorem{cor}[theorem]{Corollary}

\theoremstyle{definition}

\theoremstyle{remark}

\newtheorem{tab}[theorem]{\bf Table}

\numberwithin{equation}{section}


\newcommand{\FF}{{\mathbb{F}}}
\newcommand{\KK}{{\mathbb{K}}}

\newcommand{\bC}{{\mathbf{C}}}
\newcommand{\bF}{{\mathbf{F}}}

\newcommand{\bZ}{{\mathbf{Z}}}

\newcommand{\Aut}{{\operatorname{Aut}}}

\newcommand{\SL}{{\operatorname{SL}}}
\newcommand{\Sp}{{\operatorname{Sp}}}

\newcommand{\GL}{{\operatorname{GL}}}
\newcommand{\Gal}{{\operatorname{Gal}}}

\newcommand{\SCRSp}{{\operatorname{SCRSp}}}
\newcommand{\CHA}{{\operatorname{char}}}

\newcommand{\Div}{{\operatorname{Div}}}

\newcommand{\SP}{{\operatorname{SP}}}

\newcommand{\EP}{{\operatorname{EP}}}
\newcommand{\NEP}{{\operatorname{NEP}}}

\newcommand{\PC}{{\operatorname{PC}}}
\newcommand{\NPC}{{\operatorname{NPC}}}

\newcommand{\GF}{\mbox{GF}}


\let\nor=\triangleleft

\begin{document}

\title{Regular orbits of finite primitive solvable groups, III}

\author{YONG YANG}
\address{Department of Mathematics, Texas State University, 601 University Drive, San Marcos, TX 78666, USA.}

\makeatletter
\email{yang@txstate.edu}
\makeatother

\author{ALEXEY VASIL'EV}
\address{Sobolev Institute of mathematics, Novosibirsk State University, 630090, Novosibirsk}

\makeatletter
\email{a.vasilev1@g.nsu.ru}

\author{EVGENY VDOVIN}
\address{Sobolev Institute of mathematics, Novosibirsk State University, 630090, Novosibirsk}

\makeatletter
\email{vdovin@math.nsc.ru}

\Large
\subjclass[2000]{20C20, 20C15}
\date{}




\begin{abstract}

    Suppose that a finite solvable group $G$ acts faithfully, irreducibly and quasi-primitively on a finite vector space $V$, and $G$ is not metacyclic.  Then $G$ always has a regular orbit on $V$ except for a few  ``small" cases.

\end{abstract}

\maketitle
\Large
\section{Introduction} \label{sec:introduction}

Let $G$ be a finite group and $V$ a finite, faithful and completely reducible $G$-module. It is a classical theme to study orbit structure of $G$ acting on $V$. One of the most important and natural questions about orbit structure is to establish the existence of an orbit of a certain size. For a long time, there has been a deep interest and need to examine the size of the largest possible orbits in linear group actions. The orbit $\{v^g \ |\ g \in G\}$ is called regular, if $\bC_G(v)=1$ holds or equivalently the size of the orbit $v^G$ is $|G|$. It is well known that the existence of regular orbits has been studied extensively in the literature with many applications to some important questions of character theory and conjugacy classes of finite groups.

In ~\cite{PalfyPyber}, P\'alfy and Pyber asked if it is possible to classify all pairs $A$, $G$ with $(|A|, |G|)=1$ such that $A \leq \Aut(G)$ has a regular orbit on $G$. While the task is pretty challenging, at least for primitive solvable linear groups, we can say something along this line.

Suppose that a finite solvable group $G$ acts faithfully, irreducibly and quasi-primitively on a finite vector space $V$ of dimension $n$ over a finite field of order $q$ and characteristic $p$. Then $G$ has a uniquely determined normal subgroup $E$ which is a direct product of extraspecial $p$-groups for various $p$. We denote $e=\sqrt{|E/\bZ(E)|}$ (an invariant measuring the complexity of the group). It is proved in \cite[Theorem 3.1]{YY2} and \cite[Theorem 3.1]{YY3} that if $e=5,6,7$ or $e \geq 10$ and $e \neq 16$, then $G$ always have regular orbits on $V$.

If $e=1$, then $G\leq \Gamma(q^n)$, and it is possible that $G=\Gamma(q^n)$, while $\Gamma(q^n)$ does not have a regular orbit for $n\geq 2$. So for $e=1$ one cannot expect that $G$ necessarily possesses a regular orbit. In this case $G$ is metacyclic and thus there are infinitely many metacyclic primitive linear groups that do not have regular orbits.

There are also other examples for $e>1$, when $G$ does not possess a regular orbit. However, the main result of the paper implies the following.

\begin{theorem}\label{Main}
    Let $G$ be a solvable group, acting faithfully, irreducibly, and quasi-primitively on a finite vector space $V$ and $\vert V\vert>5^{18}$. Assume also that $G$ is not metacyclic. Then $G$ possesses a regular orbit on $V$.
\end{theorem}

Note that we know only a few examples of maximal irreducible primitive solvable subgroups of $\GL(V)$ that are not metacyclic and does not possess a regular orbit. We provide a much narrower list of possible groups without regular orbit in Table~\ref{exception} (see Corollary \ref{except}). 

The information on the existence of a regular orbit  has been used by several authors to study a variety of problems in the field (for example ~\cite{DOLFIEmanuele1,DOLFIEmanuele2,LNW,Ponomarenko,TurullWolf,YY4,YY5}), and the results in this paper can be used to simplify some of the proofs in ~\cite{DOLFIEmanuele1,DOLFIEmanuele2,TurullWolf,YY4,YY6}.


The bound we obtain here is not perfect. However we shall point out that only finite number of cases are left, and hopefully they can be studied by using computations on computer. We hope that in the near future, by relying on the developments of the computer programs like~\cite{GAP}, one might be able to obtain a complete classification for these remaining cases. 

Another application of the results obtained in the paper, is to study the bases of solvable permutation groups and $k$-closures of solvable groups. Usually one may   reduce the question  to the investigation of primitive permutation groups.
So consider  a primitive solvable permutation group $G\leq S_{p^k}$. Then $G=A \rtimes L$, where $A$ is elementary abelian of order $p^k$ and $L$ is an irreducible solvable subgroup of $\GL(k,p)$. If $L$ is not primitive as a subgroup of $\GL(k,p)$, then $G$ can be written as a wreath product of proper subgroups with product action, and this case can be reduced to smaller cases. Thus the primary interest is the case, when $G$ is a primitive permutation group, and $L$ is a primitive linear group. If $L$ has a regular orbit on $A$, then the base size of $G$ is at most $2$, and $G$ is equal to its $3$-closure.

\section{Notation and Lemmas} \label{sec:Notation and Lemmas}

Notation:
\begin{enumerate}
    \item  Let $G$ be a finite group, let $S$ be a subset of $G$ and let $\pi$ be a set of different primes. For each prime $s$, we denote $\SP_s(S)=\{\langle x \rangle \ | \ o(x)=s, x \in S \}$ and $\EP_s(S)=\{x \ |\ o(x)=s, x \in S \}$. We denote $\SP(S)=\bigcup \SP_s(S)$, $\EP(S)=\bigcup \EP_s(S)$ and $\EP_{\pi}(S)=\bigcup_{s \in \pi}\EP_s(S)$. We denote $\NEP(S)=|\EP(S)|$, $\NEP_s(S)=|\EP_s(S)|$ and $\NEP_{\pi}(S)=|\EP_{\pi}(S)|$.

    \item Let $n$ be an even integer, $q$ a power of a prime. Let $V$ be a standard symplectic vector space of dimension $n$ of $\FF_q$. We use $\SCRSp(n,q)$ or $\SCRSp(V)$ to denote the set of all solvable subgroups of $\Sp(V)$ which acts completely reducibly on $V$.

    \item Let $V$ be a finite vector space and let $G \leq \GL(V)$. We define $\PC(G,V,s,i)=\{x \ |\ x \in \EP_s(G)$ and $\dim(\bC_V(x))=i\}$ and $\NPC(G,V,s,i)=|\PC(G,V,s,i)|$. We will drop $V$ in the notation when it is clear in the context.

    \item If $V$ is a finite vector space of dimension $n$ over $\GF(q)$, where $q$ is a prime power, we denote by $\Gamma(q^n)=\Gamma(V)$ the semilinear group of $V$, i.e.,
\[\Gamma(q^n)=\{x \mapsto ax^{\sigma}\ |\ x \in \GF(q^n), a \in \GF(q^n)^{\times}, \sigma \in \Gal(\GF(q^n)/\GF(q))\}.\]

    \item We use $H \wr S$ to denote the wreath product of $H$ with $S$ where $H$ is a group and $S$ is a permutation group.
    \item We use $\bF(G)$ to denote the Fitting subgroup of $G$. 
    \item Let $n$ be a positive integer, and we use $\Div(n)$ to denote the number of different prime divisors of $n$. Clearly if $n=1$, $\Div(n)=0$.

\end{enumerate}

\begin{theorem} \label{Strofprimitive}

    Suppose that a finite solvable group $G$ acts faithfully, irreducibly and
    quasi-primitively on an $d$-dimensional finite vector space $V$ over a finite field $\FF$ of characteristic $p$.
    Then every normal abelian subgroup of $G$ is cyclic and $G$ has normal subgroups $Z \leq U \leq F \leq A \leq G$ and a characteristic subgroup $E \leq F$ such that,
    \begin{enumerate}
        \item $F=EU$ is a central product where $Z=E \cap U=\bZ(E)$ and $\bC_G(F) \leq F$;
        \item $F/U \cong E/Z$ is a direct sum of completely reducible $G/F$-modules;
        \item There is decomposition $E = E_1 \times E_2 \times \ldots \times E_k$, $E_i$ is an extraspecial $r_i$-group for $i=1,\dots,s$ for some distinct primes $r_i$, $|E_i|=r_i^{2n_i + 1}$ for some $n_i \geq 1$. Denoting $e_i = r_i^{n_i}$, we have $e=e_1 \dots e_s$ divides $d$ and $\gcd(p,e)=1$;
        \item $A=\bC_G(U)$ and $G/A \lesssim \Aut(U)$, $A/F$ acts faithfully on $E/Z$;
        \item $A/\bC_A(E_i/Z_i) \lesssim \Sp(2n_i,r_i)$;
        \item $U$ is cyclic and acts fixed point freely on $W$ where $W$ is an irreducible submodule of $V_U$;
        \item $|V|=|W|^{eb}$ for some integer $b$;
        \item $G/A$ is cyclic and $|G:A| \mid \dim(W)$. $G=A$ when $e=d$;
        \item Let $g \in G \backslash A$, assume that $o(g)=t$ where $t$ is a prime and let $|W|=p^a$. Then $t \mid a$ and we can view the action of $g$ on $U$ as follows, $U \leq \FF_{p^{a}}^*$ and $g \in \Gal(\FF_{p^{a}}:\FF_p)$.
    \end{enumerate}
\end{theorem}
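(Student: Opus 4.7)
The plan is to build the claimed structure layer by layer from the Fitting subgroup outward, relying on three standard inputs: quasi-primitivity forces every normal abelian subgroup to be cyclic; a $p$-group with cyclic center is of symplectic type (P.~Hall); and Clifford theory applied to the chain $Z \leq U \leq F \leq G$ controls dimensions.

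First I would verify that every normal abelian $A \nor G$ is cyclic: quasi-primitivity makes $V_A$ homogeneous, so $A$ embeds as scalars in $\operatorname{End}_A(W)^\times$ for an irreducible $A$-constituent $W$, and this endomorphism ring is a finite field. In particular $\Omega_1(\bZ(P_i))$ is cyclic for every Sylow $p_i$-subgroup $P_i$ of $\bF(G)$. By P.~Hall's theorem each $P_i$ is of \emph{symplectic type}, decomposing as a central product of an extraspecial part $E_i$ (defined as $\Omega_1(P_i)$ for odd $p_i$ and as the iterated commutator with $G$ at $p_i=2$, to bypass the dihedral/quaternion pathology) with a cyclic factor. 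The cyclic factors across distinct primes have coprime orders, so their product with $Z$ yields the cyclic $U$. Setting $F = EU$ gives (1) and (2): by construction $E \cap U = Z = \bZ(E)$, and the identity $\bC_G(\bF(G)) \leq \bF(G)$ for solvable $G$ pushes $\bC_G(F)$ into $F$; the direct-sum decomposition $\bar E = \bigoplus_i \bar E_i$ is $G/F$-invariant because the $p_i$ are distinct, and complete reducibility of each $\bar E_i$ follows from Clifford/Maschke applied to the symplectic representation in (5).

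Next, the commutator pairing $[\cdot,\cdot]\colon \bar E_i \times \bar E_i \to Z_i$ is a $G$-invariant nondegenerate alternating form, yielding the symplectic embedding $A/\bC_A(E_i/Z_i) \lesssim \Sp(2n_i,p_i)$ of (5). From the extraspecial structure, $e_i = p_i^{n_i}$ and the $e_i$ are pairwise coprime, so $e = \prod e_i$; faithfulness of $V$ forces $O_r(G) = 1$, hence $\gcd(r,e) = 1$. Since $U$ is cyclic and central in $F$, quasi-primitivity makes $V_U$ a direct sum of copies of a single faithful irreducible $U$-summand $W$ on which $U$ acts fixed-point-freely by scalars, giving (6). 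The classical fact that each faithful irreducible representation of $E_i$ with prescribed central character has dimension exactly $e_i$, combined with the central product $F = EU$, shows that every $F$-irreducible constituent of $V$ has dimension $e \cdot \dim W$; inducing up through $G/F$ yields $|V| = |W|^{eb}$ for some integer $b$ (item (7)), and in particular $e \mid n$, completing (3).

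Finally, since $A = \bC_G(U)$, the quotient $G/A$ embeds in $\Aut(U)$. The scalar action of $U$ on $W$ realizes $U$ as a subgroup of $\FF_{r^m}^\times$ where $m = \dim W$, and conjugation by $g \in G$ restricts to a field automorphism of this cyclic group; hence $G/A \hookrightarrow \Gal(\FF_{r^m}/\FF_r)$, a cyclic group of order $m$. This yields (4), (8), and (9) simultaneously: cyclicity of $G/A$, the divisibility $|G:A| \mid m$, the equality $G = A$ when $e = n$ (since then (7) forces $m=1$), and the Galois description in (9) for any prime-order $g \in G \setminus A$. Faithfulness of $A/F$ on $E/Z$ holds because any $a \in A$ centralizing both $U$ and $E/Z$ centralizes all of $F$ and so lies in $\bC_G(F) \leq F$. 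The main obstacle is item (7): one must coordinate the Clifford-theoretic dimension computation at $F$ with the Galois-theoretic computation at $G/A$ so that the two counts agree, and then propagate the resulting factor $e \cdot \dim W$ back through $V$; once this bookkeeping is set up, the remaining assertions fall out of the same diagram.
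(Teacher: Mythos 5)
You should first note that the paper does not actually prove this theorem: the ``proof'' is a one-line citation to \cite[Theorem 2.2]{YY3}, which in turn rests on Manz--Wolf \cite[Corollary 1.10]{MAWOLF} together with the refined definition of $E_i$ at $p_i=2$ from \cite{YY2}. Your proposal therefore reconstructs the underlying argument from scratch, and most of its architecture is the standard one: cyclicity of normal abelian subgroups from homogeneity of $V_A$ under quasi-primitivity, P.~Hall's theorem on $p$-groups of symplectic type, the commutator pairing giving $A/\bC_A(E_i/Z_i)\lesssim \Sp(2n_i,p_i)$, the identification of $\operatorname{End}_U(W)$ with $\FF_{r^m}$ giving the Galois description of $G/A$ and hence (4), (8), (9), and the dimension count through the central product $F=EU$ giving (3) and (7). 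That all matches the cited proof.

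There is, however, one step that is genuinely wrong as stated. You justify the complete reducibility of $\bar E_i=E_i/Z_i$ as a $G/F$-module (item (2)) by ``Clifford/Maschke applied to the symplectic representation in (5).'' Maschke does not apply here: the acting group is a subgroup of $\Sp(2n_i,p_i)$, whose order is divisible by $p_i$, the characteristic of the module $\bar E_i$, so there is no coprimality to invoke; and Clifford's theorem yields semisimplicity only on restriction to a normal subgroup of a group already acting irreducibly, which is not the situation. The correct mechanism is the interaction of the $G$-invariant nondegenerate alternating form with the cyclicity of normal abelian subgroups: for a $G$-submodule $\bar X\leq\bar E_i$, the radical $\bar X\cap\bar X^{\perp}$ is totally isotropic, so its preimage $N\leq E_i$ satisfies $[N,N]=1$ and $N\nor G$, hence $N$ is cyclic; since $E_i$ has exponent $p_i$ for odd $p_i$ (and since $G$ has no nonzero fixed points on $\bar E_2$ by the very definition $E_2=[P_2,G,\dots,G]$ when $p_i=2$), this forces $N=Z_i$, so $\bar E_i=\bar X\oplus\bar X^{\perp}$ and induction finishes. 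Relatedly, you assert but do not verify that the commutator definition of $E_i$ at $p_i=2$ actually produces an extraspecial group rather than merely ``bypassing the pathology''; that verification is precisely the content added in \cite{YY2} beyond Manz--Wolf. With those two repairs your sketch coincides with the cited proof.
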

\begin{proof}
    This is slightly reformulated \cite[Theorem 2.2]{YY3}.
\end{proof}

In this paper we investigate only the case when $e$ is a prime power, so Theorem~\ref{Strofprimitive} can be simplified to the following

\begin{theorem} \label{Strofprimitiveprime}

    Suppose that a finite solvable group $G$ acts faithfully, irreducibly and
    quasi-primitively on a $d$-dimensional finite vector space $V$ over a finite field $\FF$ of characteristic $p$.
    Then every normal abelian subgroup of $G$ is cyclic and $G$ has normal subgroups $Z \leq U \leq F \leq A \leq G$ and $E \leq F$ as in the Theorem~\ref{Strofprimitive}. Suppose $|F:U|=e^2$ is a prime power. Then the following holds:
    \begin{enumerate}
        \item $U$ is cyclic, $|U|= p^a - 1$ for some $a \ge 1$ and $W$ can be identified with the span of $U$ which is isomorphic to $\GF(p^a)$;
        \item $\bC_G(F) \le F = \bF(G)$ and $e$ divides $p^a - 1$;
        \item $E$ is an extraspecial $r$-group for a prime $r$, $|E / Z|=e^2=r^{2n}$. Furthermore $e$ divides $d$ and $\gcd(p,e)=1$;
        \item $A=\bC_G(U)$ and $G/A \lesssim \Aut(U)$, $A/F$ acts faithfully on $E/Z$;
        \item $A/\bC_A(E/Z) \lesssim \Sp(2n,r)$;
        \item $G/A$ is cyclic and $|G:A| \mid \dim(W)$. $G=A$ when $e=d$;
    \end{enumerate}
\end{theorem}


\renewcommand\theenumi{(\arabic{enumi})}%
\renewcommand*\labelenumi{\theenumi}

The following Lemma~\ref{decomptriv} is just a linear algebra fact.
\begin{lemma}\label{decomptriv}
    Suppose that a finite group $G$ acts faithfully and irreducibly on a finite vector space $V$ over field $\FF$ of order $q$. Let $s$ be prime divisor of $q-1$ and $\lambda_1,\lambda_2,\ldots,\lambda_s$ are all the $s$-roots of the unity in the field $\FF_{q}$. Let $x \in \EP_s(G), z \in \EP_s(Z(G))$.
    \begin{enumerate}
        \item There exists $1 \le k \le s$ such that $vz = \lambda_k v$ for every $v\in V$.
        \item There is a unique decomposition $V = V_{\lambda_1} \oplus \ldots \oplus V_{\lambda_s}$ of $V$ in the direct sum of $x$-submodules such that $vx = \lambda_i v$ for every $v \in V_{\lambda_i}$. In particular, if $\lambda_1 = 1$, then $V_{\lambda_1} = \bC_V(x)$.
        \item Since $s$ is a prime, we derive $\{ \lambda_k^j \mid j=0,1,\ldots,s-1\} = \{\lambda_1, \lambda_2,\ldots,\lambda_s\}$ and
              \[
                  V = \bC_V(x) \oplus \bC_V(xz) \oplus \ldots \oplus \bC_V(x z^{s-1})
                  .\]
    \end{enumerate}
\end{lemma}

\begin{lemma}  \label{roughestimate}
    Suppose that a finite solvable group $G$ acts faithfully and quasi-primitively on a finite vector space $V$ over the field $\FF$. Let $g \in \EP_s(G)$ and we use the notation in Theorem ~\ref{Strofprimitiveprime}.
    \begin{enumerate}
        \item If $g \in F$ then $|\bC_V(g)| \leq |W|^{\frac 1 s eb}$.
        \item If $g \in A \backslash F$ then $|\bC_V(g)| \leq |W|^{\lfloor \frac 3 4 e \rfloor b}$.
        \item If $g \in A \backslash F$, $s\geq 3$ and $s \nmid |E|$, then $|\bC_V(g)| \leq |W|^{\lfloor \frac 1 2 e \rfloor b}$.
        \item If $g \in A \backslash F$, $s=2$ and $s \nmid |E|$, then $|\bC_V(g)| \leq |W|^{\lfloor \frac 2 3 e \rfloor b}$.
        \item If $g \in G \backslash A$ then $|\bC_V(g)| \leq |W|^{\frac 1 s eb}$.
    \end{enumerate}
\end{lemma}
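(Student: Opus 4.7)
The unifying strategy is to reduce each of the five cases to linear algebra on the faithful irreducible $F$-module singled out by Theorem~\ref{Strofprimitive}. Since $U$ is cyclic and acts fixed-point-freely on $W$ by parts~(6)--(7) of that theorem, $U$ turns $V$ into an $\FF_{|W|}$-vector space of dimension $eb$, and the extraspecial structure in parts~(1)--(3) decomposes $V|_F$ into $b$ copies of the unique faithful $F$-irreducible module $\tilde W$ of $\FF_{|W|}$-dimension $e$. For any $g$ that commutes with $U$, the computation of $\bC_V(g)$ thus reduces to that of $\bC_{\tilde W}(g)$, with a clean factor of $b$.

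For~(1), a prime-order $g\in F$ acts semisimply on $\tilde W$ (since $s$ divides $|E||U|$, which is coprime to $r$). Writing $g=xu$ with $x\in E$ and $u\in U$, standard character theory of extraspecial groups combined with the scalar action of $U$ forces either $\bC_V(g)=0$ (when $x\in Z$ and $g$ is a nontrivial scalar) or that each of the $s$ eigenvalues of $g$ on $\tilde W$ appears with multiplicity $e/s$; in either case $|\bC_V(g)|\leq |W|^{eb/s}$. For~(5), by Theorem~\ref{Strofprimitive}(9), an order-$s$ element $g\in G\setminus A$ acts on $U\subseteq \FF_{r^m}^{\times}$ as an element of $\Gal(\FF_{r^m}/\FF_r)$ of order~$s$, whose fixed subfield has index~$s$ in $\FF_{r^m}$; combined with the $F$-module structure of $V$, this places $\bC_V(g)$ inside an $\FF_{r^{m/s}}$-subspace of $\FF_{|W|}$-dimension at most $eb/s$.

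For~(2)--(4), $g\in A\setminus F$ centralizes $U$ (and so respects the $\FF_{|W|}$-structure) and acts faithfully on $E/Z\cong\bigoplus_i \overline{E}_i$ via the symplectic embedding $A/F\hookrightarrow \prod_i \Sp(2n_i,p_i)$ of Theorem~\ref{Strofprimitive}(5). A Clifford-theoretic computation bounds $\dim_{\FF_{|W|}}\bC_{\tilde W}(g)$ by a product, over the indices~$i$, of quantities determined by the symplectic fixed subspaces $\bC_{\overline{E}_i}(g)$. Case~(2) then follows from the general fact that any nontrivial symplectic element of prime order~$s$ has fixed subspace of $\FF_{p_i}$-dimension at most $\tfrac{3}{2}n_i$ on $\overline{E}_i$, the symplectic involution on an extraspecial $2$-group being the extremal configuration. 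Under the extra hypothesis $s\nmid |E|$, case~(3) (with $s\geq 3$) uses that $g$ is semisimple on each $\overline{E}_i$ without a trivial subrepresentation on any hyperbolic plane, forcing the fixed dimension to at most $n_i$; case~(4) (with $s=2$) uses the orthogonal $\pm 1$-eigendecomposition of $\overline{E}_i$ together with a small combinatorial optimisation over the factors $e_i$ to yield the $\lfloor \tfrac{2}{3}e\rfloor$ bound.

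The main obstacle is the symplectic bookkeeping in~(2)--(4): one must precisely relate $\dim_{\FF_{|W|}} \bC_{\tilde W}(g)$ to the fixed subspaces of $g$ acting symplectically on each $\overline{E}_i$, and verify that the extremal configurations do not exceed the $\tfrac{3}{4}$, $\tfrac{1}{2}$, and $\tfrac{2}{3}$ ratios. Cases~(1) and~(5) are routine once the decomposition $V\cong \tilde W^{\oplus b}$ over $\FF_{|W|}$ is in place, reducing to standard extraspecial character theory and finite Galois theory respectively.
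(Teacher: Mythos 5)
Your treatment of parts (1) and (5) is essentially the paper's. For (1) the paper restricts $V$ to $E$, passes to an algebraically closed field, and uses that the Brauer character of an extraspecial group vanishes off its centre, so $\dim \bC_V(\langle g\rangle) = [\chi_{\langle g\rangle},1_{\langle g\rangle}] = \frac 1 s \dim V$; your eigenvalue--multiplicity formulation, including the twist by the scalar $u\in U$, is the same computation. Part (5) by Galois descent on the semilinear action of $g$ on the $\FF_{|W|}$-structure is the standard argument (the paper simply cites \cite[Lemma 2.4]{YY3} for (2)--(5)).

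The gap is in (2)--(4). First, the ``general fact'' you invoke for (2) --- that a nontrivial prime-order element of $\Sp(2n_i,p_i)$ fixes a subspace of $\overline{E}_i$ of dimension at most $\tfrac32 n_i$ --- is false: a transvection in $\Sp(2n_i,2)$ fixes a hyperplane of dimension $2n_i-1$, which exceeds $\tfrac32 n_i$ once $n_i\ge 3$. Similarly, for (3) a semisimple element of order $3$ in $\Sp(6,2)$ can fix a $4$-dimensional subspace, contradicting your bound $n_i=3$. Second, $\dim\bC_{\tilde W}(g)$ is not controlled by a product over $i$ of local fixed-space data; the correct mechanism is Isaacs' theorem (Lemma~\ref{Isaacsgood}, used through Lemma~\ref{mygood}): $\dim\bC_V(g)=\frac 1s\sum_{j}\chi(g^j)$, where $\chi(g^j)=0$ if $g^j$ is a \emph{bad} element and $|\chi(g^j)|^2=|\bC_{\bar E}(g^j)|$ if it is \emph{good}, and for a good involution $|\bC_{\bar E}(g)|$ is the square of an integer (of an even integer when $2\mid e$), hence at most $e^2/4$. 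This good/bad dichotomy is exactly what rescues the elements with large fixed space on $\bar E$: a transvection-type element is bad, so it contributes only $e/s$, while a good involution is forced to satisfy $|\bC_{\bar E}(g)|\le e^2/4$, giving $\lfloor\frac12(e+\frac e2)\rfloor=\lfloor\frac34 e\rfloor$ in (2); when $2\nmid|E|$ one gets $|\bC_{\bar E}(g)|\le e^2/9$ and hence $\lfloor\frac23 e\rfloor$ in (4). Without this dichotomy your argument cannot close, because the extremal symplectic elements you would need to exclude genuinely exist.
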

\begin{proof}
    (1) is a slight improvement of \cite[Lemma 2.4(1)]{YY3}. We only need to consider the cases when $s \geq 3$. Assume first that $\langle g \rangle \leq F$, $\langle g \rangle \not \leq U$. Then $s \neq r$ and $\langle g \rangle \leq E \nor G$, with $E$ an extraspecial $s$-group. We consider the restriction $V_E$. It is a direct sum of faithful irreducible $E$-modules, since $\bZ(E) \leq U$ acts fixed point freely on $V$. As dimensions of centralizers do not change by extending the ground field and they add up in direct sums, we can assume that $V$ is an irreducible, faithful $E$-module on an algebraically closed field $\KK$. If $\chi$ is the (Brauer) character corresponding to $V$ , then, as $r=\CHA(\KK) \neq s$, $\dim_{\KK}(\bC_V(S)) = [\chi_S, 1_S] = \frac 1 s \dim_{\KK}(V)$ since $\chi(x) = 0$ for every $x \in E \backslash  Z(E)$, and (1) is proved.

    (2), (3), (4) and (5) follow from \cite[Lemma 2.4]{YY3}.

\end{proof}

\begin{lemma}  \label{mygood}
    Suppose that a finite solvable group $G$ acts faithfully, irreducibly and quasi-primitively on a finite vector space $V$ over a field $\FF$. Using the notation in Theorem ~\ref{Strofprimitiveprime}, let $x \in \EP_s(A \backslash F)$ and $(s,\CHA \FF)=1$. Let $C/Z=\bC_{E/Z}(x)$, we call $x$ a good element if $[x,C]=1$, and we call $x$ a bad element if it is not good.
    \begin{enumerate}
        \item Assume $x$ is a bad element and let $\beta=e/s$, then $|\bC_V(x)| \leq |W|^{\beta b}$.

        \item Assume $x$ is a good element and $|\bC_{E/Z}(x)| \leq a$, let \[\beta = \lfloor \frac 1 s (e+(s-1)a^{1/2}) \rfloor \]
              then $|\bC_V(x)| \leq |W|^{\beta b}$.

    \end{enumerate}
\end{lemma}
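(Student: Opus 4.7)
The plan is to deduce all three parts from Lemma~\ref{Isaacsgood} applied to the irreducible $\KK\langle E, x\rangle$-constituents of $V_\KK$, where $\KK$ is an algebraic closure of $\FF$. Since $x$ centralizes $Z$, it fixes every central character of $E$, so it fixes the isomorphism class of every faithful irreducible $\KK E$-module appearing in $V_\KK$. Because $\langle x\rangle$ is cyclic, $H^2(\langle x\rangle,\KK^\times)=0$, so each such iso class extends to $\langle E, x\rangle$. Consequently, every irreducible $\KK\langle E, x\rangle$-constituent $U$ of $V_\KK$ restricts to a single faithful irreducible $\KK E$-module of dimension $e$. Writing $|W|=r^m$, one has $\dim_\KK V_\KK = meb$, so $V_\KK$ has exactly $mb$ such constituents. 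For each $U$ and each $1\le i\le s-1$, Lemma~\ref{Isaacsgood} gives $\chi_U(x^i)=0$ when $x$ is bad, and $|\chi_U(x^i)|^2=|\bC_{E/Z}(x)|$ when $x$ is good (using $\langle x^i\rangle=\langle x\rangle$).

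For parts (1) and (2) I would compute $\dim_\FF\bC_V(x)=\dim_\KK\bC_{V_\KK}(x)=\frac{1}{s}\sum_{i=0}^{s-1}\chi_{V_\KK}(x^i)$. In (1) all terms with $i\ne 0$ vanish, yielding $|\bC_V(x)|=|W|^{(e/s)b}$. In (2), the triangle inequality across the $mb$ constituents gives $|\chi_{V_\KK}(x^i)|\le mb\sqrt{a}$ for each $i\ne 0$, hence
\[ \dim_\KK\bC_{V_\KK}(x) \;\le\; \frac{mb}{s}\bigl(e+(s-1)\sqrt{a}\bigr). \]
Since $x$ commutes with $U$ and the fixed-point-free action of $U$ on $W$ endows $V$ with a natural $\FF_{|W|}$-vector space structure of dimension $eb$, the operator $x$ is $\FF_{|W|}$-linear; thus $\dim_{\FF_{|W|}}\bC_V(x)=\dim_\FF\bC_V(x)/m$ is an integer bounded by $(b/s)(e+(s-1)\sqrt{a})$, and the floor absorbs the fractional part to produce $|\bC_V(x)|\le |W|^{\beta b}$.

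For part (3), since $o(x)=2$ and $\CHA\FF\ne 2$, the element $x$ has eigenvalues in $\{\pm 1\}$ on any extended constituent $U$, so $\chi_U(x)\in\mathbb{Z}$; Lemma~\ref{Isaacsgood} then gives $\chi_U(x)^2=|\bC_{E/Z}(x)|$, which establishes the perfect-square claim. If in addition $2\mid e$, then exactly one $p_i=2$ (the $p_j$ being pairwise distinct primes), and I would split $|\bC_{E/Z}(x)|=\prod_j|\bC_{\overline{E}_j}(x)|$. For odd $p_j$, $x$ is a semisimple involution on the symplectic $\FF_{p_j}$-space $\overline{E}_j$; invariance of the form forces the $+1$- and $-1$-eigenspaces to be mutually orthogonal, so $\bC_{\overline{E}_j}(x)$ is a non-degenerate symplectic subspace of even $\FF_{p_j}$-dimension, and $|\bC_{\overline{E}_j}(x)|$ is a square of an odd integer. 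For $p_i=2$, the relation $(x-1)^2=0$ on $\overline{E}_i$ gives $\mathrm{Im}(x-1)\subseteq\ker(x-1)=\bC_{\overline{E}_i}(x)$, whence $\dim_{\FF_2}\bC_{\overline{E}_i}(x)\ge n_i\ge 1$. Combined with the global perfect-square conclusion (and the fact that the odd-$p_j$ factors are already squares), $\dim_{\FF_2}\bC_{\overline{E}_i}(x)$ must be even and hence $\ge 2$, so $\sqrt{|\bC_{\overline{E}_i}(x)|}\ge 2$ is even, and thus $\sqrt{|\bC_{E/Z}(x)|}$ is even.

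The main technical obstacles I anticipate are the Clifford-theoretic bookkeeping in the setup (confirming that each $V_\KK$-constituent is an extension rather than an induction, and correctly tracking the multiplicity $mb$) together with the integrality/floor step in (2); for (3) the delicate point is exploiting the global perfect-square conclusion to force parity at the $p_i=2$ factor, thereby avoiding a direct structural analysis of involutions in the orthogonal group on $\overline{E}_i$.
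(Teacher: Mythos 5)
The paper itself does not reprove this lemma; it simply defers to \cite[Lemma 2.6]{YY2}. Your overall strategy --- extend scalars to $\KK$, show via Clifford theory that each irreducible $\KK\langle E,x\rangle$-constituent $M$ of $V_\KK$ restricts irreducibly to $E$ with dimension $e$ (there are $mb$ of them), and then feed Lemma~\ref{Isaacsgood} into the fixed-point formula $\dim\bC_M(x)=\frac 1 s\sum_{i=0}^{s-1}\chi_M(x^i)$ --- is the standard route, and your parts (1) and (3) are correct. (For the second claim of (3) there is a shortcut you may prefer: on each $M$ the involution $x$ has integer trace $\chi_M(x)=2\dim M_{+}-e\equiv e\pmod 2$, so $2\mid e$ forces $\chi_M(x)$ even and nonzero, and $|\bC_{E/Z}(x)|=\chi_M(x)^2$ is the square of an even integer without any eigenspace analysis on the $\overline{E}_j$.)

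Part (2), however, has a genuine gap in its last step. You bound $\dim_{\FF_{|W|}}\bC_V(x)$ by $bt$ with $t=\frac 1 s\bigl(e+(s-1)\sqrt a\bigr)$ and then assert that integrality lets ``the floor absorb the fractional part,'' i.e.\ that an integer $N\le bt$ must satisfy $N\le b\lfloor t\rfloor$. This is false for $b\ge 2$: with $b=2$ and $t=6.9$ one may have $N=13$ while $b\lfloor t\rfloor=12$. The lemma is invoked in the main theorem precisely in situations with $b\ge 2$ and $t\notin\mathbb{Z}$ (e.g.\ $e=9$, $s=3$, $a=27$, where $t=3+2\sqrt3$), so the point cannot be dismissed. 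The repair is to take the floor one constituent at a time: for each of the $mb$ constituents $M$, the integer $\dim_\KK\bC_M(x)=\frac 1 s\bigl(e+\sum_{i=1}^{s-1}\chi_M(x^i)\bigr)$ is at most $t$, hence at most $\lfloor t\rfloor=\beta$; summing over the constituents gives $\dim_\KK\bC_{V_\KK}(x)\le mb\beta$ and therefore $|\bC_V(x)|\le r^{mb\beta}=|W|^{\beta b}$. With that local version of the integrality argument, your proof is complete.
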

\begin{proof}
    This is \cite[Lemma 2.6]{YY2}.
\end{proof}

\begin{lemma}  \label{basiccounting}
    Assume $G$ satisfies Theorem ~\ref{Strofprimitiveprime} and we adopt the notation in it. Let $s$ be a prime and $x \in \EP_s(A \backslash F)$ and assume $|\bC_{E/Z}(x)|=r^m$. Define $U_{s}= \gcd(|U|, s)$. We have the following,
    \begin{enumerate}
        \item $\NEP_s(A \backslash F) \leq \NEP_s(A/F)|F|$.
        \item $\NEP_s(xF) \leq M \cdot U_{s}$ where
              \[M=\left\{ \begin{array}{lll} r^{2n} & \mbox{if $s=r \neq 2$}; \\ r^{2n-m} & \mbox{if $s \neq r$}; \\ 2^m & \mbox{if $s=r=2$}. \end{array} \right.\]
        \item Assume that $s=2$ and $x$ is a good element. Define $S=\{y \ |\ y \in \EP_2(xF)$ and $y$ is a good element$\}$, then $|S| \leq M \cdot U_{2}$ where
              \[M=\left\{ \begin{array}{lll} r^{2n-m} & \mbox{if $r \neq 2$}; \\ 2^m & \mbox{if $r=2$ and $n \geq m$}; \\ 2^{2n-m} & \mbox{if $r=2$ and $n<m$}. \end{array} \right.\]
    \end{enumerate}
\end{lemma}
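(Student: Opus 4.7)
My plan is to treat all four bounds as counting statements over the quotient map $\pi \colon A \to A/F$, exploiting the central-product structure $F = U E_1 \cdots E_s$ from Theorem \ref{Strofprimitive} together with the properties $U \leq \bZ(F)$, $A = \bC_G(U)$, and $E_i \nor G$; these ensure that any lift $x \in A$ commutes with $U$ and normalizes each $E_i$.

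For part (1) I will use the fibration $\pi$ directly: $\pi(y)$ has order exactly $p$ whenever $y \in \EP_p(A \backslash F)$ (it is non-trivial because $y \notin F$, and of order dividing $p$), and each fiber of $\pi$ has size $|F|$, so summing over cosets yields the bound. Part (2) will then drop out from part (3), since the bound $\prod_i M_i \cdot U_p$ should majorize $|F|/\prod_{p_i \neq p} p_i$ coset by coset once the saving $M_i \leq p_i^{2n_i - 1}$ is applied for each $p_i \neq p$ where $\bar x$ has a non-trivial fixed vector on $E_i/Z_i$; the residual configurations I intend to rule out using the faithful action $A/F \hookrightarrow \prod_i \Sp(2n_i,p_i)$ from Theorem \ref{Strofprimitive}(5), together with the constraint that the image of $\bar x$ must be non-trivial on at least one $E_i/Z_i$.

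For the core assertion (3) I would fix $x \in \EP_p(A \backslash F)$ and parametrize a general element of $xF$ as $xue_1 \cdots e_s$ with $u \in U$ and $e_i \in E_i$. Because $u$ is central in $F$ and each $E_i$ is normalized by $x$, the condition $(xue_1 \cdots e_s)^p = 1$ separates into independent conditions on $u$ (contributing $U_p = \gcd(|U|,p)$ solutions) and on each $e_i$. Reducing modulo $Z_i$ turns the $E_i$-equation into a linear problem on the $\FF_{p_i}$-symplectic space $E_i/Z_i$, governed by the operator $1 + \bar x + \cdots + \bar x^{p-1} = (\bar x - 1)^{p-1}$ (in characteristic $p_i$ when $p = p_i$); counting $\FF_{p_i}$-solutions and lifting back through $Z_i$ to $E_i$ produces the three cases for $M_i$ recorded in the statement.

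For (4) I plan to refine the argument of (3) by restricting to good $xe$, i.e.\ those with $[xe, C] = 1$ for $C/Z = \bC_{E/Z}(xe)$. This extra constraint localizes on the $\bar x$-fixed subspace of each $E_i/Z_i$, and re-running the count should replace the $p_i = 2$ factor $2^{m_i}$ by $2^{m_i}$ when $n_i \geq m_i$ and by $2^{2n_i - m_i}$ when $n_i < m_i$, while leaving the odd-prime factors unchanged. I expect the main obstacle throughout to be the $p = p_i = 2$ situation, where the squaring map on extraspecial $2$-groups is quadratic rather than linear and the residue $(xe_i)^2 \in Z_i$ introduces a bilinear correction forcing $\bar{e_i}$ into the $\bar x$-fixed subspace; this is the point that makes the reduction to a pure linear counting problem delicate in both (3) and (4).
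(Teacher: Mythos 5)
Your outline for (1) and (3) follows essentially the route the paper relies on: the paper's own proof of this lemma is a citation of \cite[Lemma 2.7]{YY2} for the count modulo $U$, plus the single new observation that $U \leq \bZ(A)$ forces $\NEP_p(\alpha U) \leq \gcd(|U|,p) = U_p$ --- which is exactly the $u$-part of your parametrization $xue_1\cdots e_s$. The reduction on $E_i/Z_i$ via the operator $1+\bar x+\cdots+\bar x^{p-1}$ and the special treatment of $p=p_i=2$ (where $e_i^x e_i \in Z_i$ forces $\bar e_i \in \bC_{\bar E_i}(\bar x)$) are the right mechanisms and do produce the three values of $M_i$ in (3). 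However, two steps of your plan would fail as written. For (2), your derivation from (3) hinges on ``ruling out'' the indices with $p_i \neq p$ and $m_i = 0$; these cannot be ruled out, since an element of order coprime to $p_i$ may well act fixed-point-freely on $E_i/Z_i$ (an order-$3$ element acting on $Q_8/\bZ(Q_8)$ already does this), and faithfulness of $A/F$ on $E/Z$ gives no obstruction. The intended argument for (2) is direct and does not pass through (3): for each $p_i \neq p$ the subgroup $Z_i \leq U \leq \bZ(A)$ is central of order $p_i$ coprime to $p$, so if $y$ and $yz$ both have order $p$ with $z \in \prod_{p_i \neq p} Z_i$, then $z^p=1$ forces $z=1$; hence every coset of $\prod_{p_i\neq p} Z_i$ contains at most one element of order $p$ and $\NEP_p(xF) \leq |F|/\prod_{p_i\neq p}p_i$ per coset.

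For (4) you have only restated the target. Confining $\bar e_i$ to the $\bar x$-fixed subspace is already the content of (3); it yields $2^{m_i}$ and can never produce the exponent $2n_i-m_i$ when $n_i<m_i$. The missing idea is this: since $\bC_{\bar E}(xue)=\bC_{\bar E}(x)=C/Z$ for every element of the coset $xF$ (inner automorphisms from $E$ and elements of $U$ act trivially on $\bar E$), and $x$ is assumed good, the element $y=xue$ is good precisely when $[e,C]=1$, i.e.\ when $\bar e_i$ lies in the perpendicular space of $\bC_{\bar E_i}(\bar x)$ with respect to the nondegenerate commutator form on $\bar E_i$ --- a subspace of order $p_i^{2n_i-m_i}$. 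Intersecting this, for $p_i=2$, with the order-condition subspace $\bC_{\bar E_i}(\bar x)$ of order $2^{m_i}$ is what yields $\min(2^{m_i},2^{2n_i-m_i})$ and hence the two subcases of $M_i$. Without this symplectic-perp step the stated bound in (4) is not reached. (By contrast, the fact that the decomposition $f=ue_1\cdots e_s$ is unique only up to $Z$, so that the conditions $f_i\in Z_i$ and $u^p=\prod_i f_i^{-1}$ are coupled through $Z$, is a genuine but routine bookkeeping issue, not a gap.)
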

\begin{proof}
    (1)  follow from ~\cite[Lemma 2.7]{YY2}. (2) and (3) are slight improvements of \cite[Lemma 2.7]{YY2}. We only show the proof of (2) here. By the proof of ~\cite[Lemma 2.7(3)]{YY2}, we know that $\NEP_s(x F/U) \leq M$. Let $\alpha \in A$ and $o(\alpha)=s$, we consider $\NEP_s(\alpha U)$. Since $U \leq \bZ(A)$, $\NEP_s(\alpha U) \leq U_{s}$ and the result follows. (3) can be proved similarly.
\end{proof}

\begin{lemma}  \label{counttop2u}
    Let $G$ be a finite solvable group and $U$ a normal cyclic subgroup of $G$. Let $\alpha \in G \backslash U$ and $o(\alpha)=s$. Assume we can view the action of $\alpha$ on $U$ as follows, $U \leq \FF_{q^{sn}}^*$ and $\alpha \in \Gal(\FF_{q^{sn}}:\FF_q)$. Then $\NEP_s(\alpha U) \leq \frac {q^{sn}-1} {q^n-1}$.
\end{lemma}
\begin{proof}
    Let $u \in U$ and assume $o(\alpha u)=s$, then $\alpha u \alpha u \dots  \alpha u=1$.

    This implies that $\alpha u \alpha^{-1} \alpha^{2} u \alpha^{-2} \dots  u \alpha^{s-1} \alpha^{s} u=1$ and $u^{q^{n}} \cdot u^{q^{2n}} \cdots u^{q^{(s-1)n}} \cdot u=u^{\frac {q^{sn}-1} {q^n-1}}=1$.
\end{proof}

\begin{lemma}  \label{preresult}
    We have $|\SL(2,3)|=24$, $\NEP_2(\SL(2,3))=1$ and $\NEP_3(\SL(2,3))=8$.
\end{lemma}
\begin{proof}
    It is easy to check.
\end{proof}

\begin{lemma}  \label{BdSCRSp}
    Let $n$ be an even integer and $V$ be a standard symplectic vector space of dimension $n$ of field  $\FF$. Let $G \in \SCRSp(n,\FF)$.
    \begin{enumerate}
        \item Let $(n,\FF)=(2,\FF_2)$, then $G  \lesssim S_3$, $|G| \leq 6$, $\NEP_2(G) \leq 3$, $\NEP_3(G) \leq 2$, $\NPC(G,2,1) \leq 3$.
        \item Let $(n,\FF)=(4,\FF_2)$, then $|G| \leq 6^2 \cdot 2$, $\NEP_2(G) \leq 21$, $\NEP_3(G) \leq 8$, $\NEP_5(G) \leq 4$, $\NEP_{\{2,3,5\}'} = 0$, $\NEP(G) \leq 29$, $\NPC(G,2,3) \leq 6$, $\NPC(G,2,2) \leq 15$, $\NPC(G,2,1)=0$, $\NPC(G,3,2) \leq 4$.
        \item Let $(n,\FF)=(6,\FF_2)$, then $|G| \leq 6^4$, $\NEP_2(G) \leq 135$, $\NEP_3(G)\leq 242$, $\NEP_{\{2,3\}'}(G) \leq 6$, $\NPC(G,2,5) \leq 9$, $\NPC(G,2,4) \leq 45$, $\NPC(G,2,3) \leq 108$, $\NPC(G,2,2)=0$, $\NPC(G,2,1)=0$.
        \item Let $(n,\FF)=(8,\FF_2)$, then $|G| \leq 6^4 \cdot 24$, $\NEP(G)\leq 1883$, $\NPC(G,2,7) \leq 12$, $\NPC(G,2,6) \leq 90$, $\NPC(G,2,5) \leq 324$, $\NPC(G,2,4) \leq 513$, $\NPC(G,2,3)=0$, $\NPC(G,2,2)=0$, $\NPC(G,2,1)=0$.
        \item Let $(n,\FF)=(2,\FF_3)$, then $G \lesssim \SL(2,3)$ and $|G| \leq 24$. $\NEP_2(G) \leq 1$ and $\NEP_3(G) \leq 8$.
        \item Let $(n,\FF)=(4,\FF_3)$, then $|G| \leq 24^2 \cdot 2$ and $\NEP(G) \leq 107$, $\NEP_2(G) \leq 95$, $\NEP_3(G) \leq 95$, $\NEP_5(G) \leq 64$ and $G$ will have no elements with other prime order. Assume $G \not\lesssim \SL(2,3) \wr S_2$, then $\NPC(G,3,3)=0$.
    \end{enumerate}
    This follows from \cite[Lemma 2.17]{YY2}.
\end{lemma}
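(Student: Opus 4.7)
The plan is to reduce, case by case, to a short list of maximal solvable completely reducible subgroups of $\Sp(n,\FF)$, and then read off all of the $\NEP$ and $\NPC$ bounds from Lemma~\ref{S3wrS} and Lemma~\ref{preresult}. First I would use the symplectic analogue of Maschke decomposition: since $G$ is solvable and completely reducible on the symplectic space $V$, I can write $V$ as an orthogonal sum of $G$-invariant subspaces, each either a non-degenerate irreducible symplectic summand, or a hyperbolic pair $W\oplus W^{*}$ coming from a totally isotropic irreducible $W$. Grouping isomorphic summands, $G$ embeds in a direct product of wreath products $H\wr S_k$, where $H$ is a solvable irreducible subgroup of $\Sp(m,\FF)$ or $\GL(m,\FF)$ and $S_k$ permutes the isomorphic factors. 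In the characteristics at hand, the maximal such $H$ are classical: $\Sp(2,2)\cong S_3$ and the largest solvable irreducible subgroup of $\Sp(2,3)$ is $\SL(2,3)$.

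Next I would enumerate, for each listed pair $(n,\FF)$, all admissible decomposition shapes and identify the largest wreath product that can occur. For instance, in $(4,\FF_2)$ one has either $G\leq\Gamma L(1,2^4)$ (irreducible) or $G\leq S_3\wr S_2$ (two $2$-dimensional pieces); in $(6,\FF_2)$ the extremal imprimitive case is $S_3\wr S_3$; in $(8,\FF_2)$ the maximal wreath is $S_3\wr S_4$, with $S_3\wr F_{20}$ appearing in a different imprimitivity pattern; in $(4,\FF_3)$ the extremal case is $\SL(2,3)\wr S_2$. For each such containment the numbers $\NEP_s(G)$ and $\NPC(G,s,i)$ are bounded above by the corresponding entries already computed in Lemma~\ref{S3wrS}, and the order bounds $|G|\leq 6^k$ or $24^k\cdot k!$ are immediate. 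The bounds for mixed shapes (say an $\Sp(2,\FF_2)$ summand plus a hyperbolic $\GL(2,\FF_2)$ pair) are obtained by adding the two contributing $\NPC$ counts, which is why the bound $\NPC(G,2,3)\leq 108$ in (3) is larger than the $81$ coming from the pure $S_3\wr S_3$ case.

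The main obstacle will be bookkeeping: making sure that in the mixed-shape cases no configuration of summands is missed, and that the centralizer-dimension bookkeeping is done correctly (an element fixing $\bC_V(x)$ of dimension $i$ in a direct sum is a tuple whose component fixed-space dimensions add to $i$, so one has to take the right convolution of the per-factor $\NPC$ vectors from Lemma~\ref{S3wrS}). The one slightly non-routine conclusion is the implication $G\not\lesssim\SL(2,3)\wr S_2 \Rightarrow \NPC(G,3,3)=0$ in (6): here I would argue that an element of order $3$ whose fixed space on $\FF_3^{4}$ is $3$-dimensional acts on $V$ as a transvection, hence lies in an $\SL(2,3)$-factor on a non-degenerate $2$-dimensional summand complementary to its fixed line, forcing the ambient $G$ into $\SL(2,3)\wr S_2$. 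All remaining inequalities then follow mechanically from Lemma~\ref{S3wrS} and Lemma~\ref{preresult}.
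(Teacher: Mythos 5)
The paper offers no proof of this lemma beyond the citation to \cite[Lemma 2.17]{YY2}, so any self-contained argument is necessarily a different route from the paper's. Your framework (orthogonal/hyperbolic decomposition, grouping isotypic summands, embedding into wreath products, then reading off counts from Lemma~\ref{S3wrS}) is the right skeleton and does recover the order bounds and the extremal wreath-product cases. But there is a genuine gap: ``enumerate all admissible decomposition shapes'' only covers the reducible and imprimitive groups, whereas the irreducible \emph{primitive} solvable subgroups (semilinear groups of the form $\Gamma(q^{n})\cap\Sp(n,q)$, normalizers of extraspecial subgroups, tensor-decomposable cases) do not arise from any decomposition of $V$ and are not bounded by Lemma~\ref{S3wrS}. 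Several of the stated numbers visibly come from exactly these cases and cannot be ``read off'' as you claim: $\NEP_{\{2,3\}'}(G)\leq 4$ in (2) and $\leq 6$ in (3) come from elements of order $5$ and $7$ living in semilinear subgroups; $\NEP_3(G)\leq 242$ in (3) exceeds the $98$ that $S_3\wr S_3$ provides; and $\NEP_2(G)\leq 95$, $\NEP_5(G)\leq 64$ in (6) exceed anything available in $\SL(2,3)\wr S_2$ (which has $\NEP_2=27$ and no elements of order $5$ at all). Producing these bounds requires an explicit enumeration of the primitive and semilinear branches (in \cite{YY2} this is where the real work, including machine computation, lives), and your proposal supplies no mechanism for it.

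A second, more local problem is your argument for the implication $G\not\lesssim\SL(2,3)\wr S_2\Rightarrow\NPC(G,3,3)=0$ in (6). The Jordan-form observation that such an element is a symplectic transvection is correct, and it is true that the transvection individually preserves an orthogonal splitting of $V$ into two non-degenerate planes. But that says nothing about $G$: to conclude $G\lesssim\SL(2,3)\wr S_2$ you need the whole group, not one element, to preserve such a decomposition. What actually has to be shown is that an irreducible solvable completely reducible subgroup of $\Sp(4,3)$ not contained in $\SL(2,3)\wr S_2$ contains no transvections, which again is a statement about the primitive and semilinear irreducible subgroups and requires the classification you have deferred. As written, the step is a non sequitur rather than a proof.
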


\section{Main Theorem} \label{sec:maintheorem}
\begin{theorem} \label{maintheorem}
    Assume that a finite solvable group $G$ acts faithfully, irreducibly and quasi-primitively on a finite vector space $V$. By Theorem ~\ref{Strofprimitive}, $G$ will have a uniquely determined normal subgroup $E$ which is a direct product of extraspecial $r$-groups for various $r$ and $e=\sqrt{|E/\bZ(E)|}$. For every $e \in \{2,3,4,8,9,16\}$ suppose that at least one of the following holds:
    \begin{enumerate}
        \item $|W|$ is prime and $|W|$ is not less than the number in the first row of Table~\ref{maintable};
        \item $|W|$ is a power of prime and $|W|$ is not less than the number in the second row of Table~\ref{maintable};
        \item $b > k, k=1,2,3,4$ and $|W|$ is not less than the number in the row number $2+k$ of Table~\ref{maintable};
    \end{enumerate}
    Then $G$ has at least one regular orbit on $V$.
\end{theorem}
\begin{tab}\label{maintable}{Lower bounds for $|W|$}
    \centering
    \begin{tabular}{|c|c|c|c|c|c|c|}
        \hline
                           & $e=16$ & $e=9$ & $e=8$ & $e=4$ & $e=3$ & $e=2$ \\ \hline
        $|W|$ is prime     & 7      & 31    & 23    & 79    & 31    & 31    \\ \hline
        $|W|$ is arbitrary & 7      & 31    & 27    & 281   & 439   & 5693  \\ \hline
        $b > 1$            & 3      & 7     & 7     & 13    & 13    & 19    \\ \hline
        $b > 2$            & 3      & 4     & 5     & 5     & 7     & 7     \\ \hline
        $b > 3$            & 3      & 4     & 3     & 5     & 4     & 5     \\ \hline
        $b > 4$            & 3      & 4     & 3     & 3     & 4     & 3     \\ \hline
    \end{tabular}

\end{tab}

\begin{proof}

    In order to show that $G$ has at least one regular orbit on $V$ it suffices to check that \[\left | \bigcup_{P \in \SP(G)}\bC_V(P) \right | < |V|.\] In the following cases we will divide the set $\SP(G)$ into a union of sets $A_i$. Clearly $$\left | \bigcup_{P \in \SP(G)}\bC_V(P) \right | \leq \sum_i \left | \bigcup_{P \in A_i} \bC_V(P) \right |.$$

    We will find $\beta_i < e$ such that
    \begin{equation}\label{ineqforbeta}
        |\bC_V(P)| \leq |W|^{\beta_i b}
    \end{equation}
    for all $P \in A_i$.
    We will find $a_i$ such that $|A_i| \leq a_i$.
    Since $|V|=|W|^{eb}$ it suffices to check that \[\sum_i a_i \cdot (|W|^{\beta_i b}-1) < |W|^{eb}-1. \] We call this inequality $\star$.\\ 

    Let $e=16$.
    Thus in the notations of Theorem~\ref{Strofprimitiveprime} $|F / U| = |E / Z| = 2^8,\ r=2,\ 2n = 8,\ 2 \mid |W|-1$ and $A/F \in \SCRSp(8,2)$. By Lemma ~\ref{BdSCRSp}(4), it follows that $|A/F| \leq 6^4 \cdot 24$.

    Define $A_i$-s in the following way:
    \begin{enumerate}
        \item $A_1=\{\langle x \rangle\ |\ x \in \EP_2(F \backslash U)\}$,
        \item $A_2=\{\langle x \rangle\ |\ x \in \EP_2(A \backslash F)$, $x$ is good and $\bC_{\bar E}(x)=2^6 \}$,
        \item $A_3=\{\langle x \rangle\ |\ x \in \EP_2(A \backslash F)$, $x$ is good and $\bC_{\bar E}(x)=2^4 \}$, $A_3'=\{\langle x \rangle\ |\ x \in \EP_2(A \backslash F)$, $x$ is good and $\bC_{\bar E}(x)=2^2 \}$,
        \item $A_4 = \{ \langle x \rangle  \mid\ x \in \EP_2(A \backslash F)$ and $x$ is bad$\}$,
        \item $A_5=\{\langle x \rangle\ |\ x \in \EP_s(A \backslash F)$ for all primes $s \geq 3\}$,
        \item $A_6 = \left\{ \langle x \rangle  \mid x \in \EP_2(G \backslash A) \right\}$,
        \item $A_{7,s} = \left\{ \langle x \rangle  \mid x \in \EP_s(G \backslash A)\right\}$ for $s \ge 3$ and $A_7 = \bigcup\limits_{s \ge 3}  A_{7,s}$.
    \end{enumerate}

    For all $\langle x \rangle \in A_1$ we have $|\bC_V(x)|\le |W|^{8b}$ by Lemma~\ref{roughestimate}(1).
    Thus we set $\beta_1=8$ (see \ref{ineqforbeta}).
    Analogously,
    \begin{itemize}
        \item for all $\langle x \rangle \in A_2$ we have $|\bC_V(x)|\le |W|^{12b}$ by Lemma~\ref{mygood}(2),
        \item for all $\langle x \rangle \in A_3$ we have $|\bC_V(x)|\le |W|^{10b}$ by Lemma~\ref{mygood}(2),
        \item for all $\langle x \rangle \in A_4$ we have $|\bC_V(x)|\le |W|^{8b}$ by Lemma~\ref{mygood}(1),
        \item for all $\langle x \rangle \in A_5$ we have $|\bC_V(x)|\le |W|^{8b}$ by Lemma~\ref{roughestimate}(3),
        \item for all $\langle x \rangle \in A_6$ we have $|\bC_V(x)|\le |W|^{8b}$ by Lemma~\ref{roughestimate}(5),
        \item for all $\langle x \rangle \in A_{7,s}$ for prime $s\ge 3$ we have $|\bC_V(x)|\le |W|^{\frac{16}{s}b}$ by Lemma~\ref{roughestimate}(5).
    \end{itemize}
    Thus we set $\beta_2=12$, $\beta_3=10$, $\beta_4=8$, $\beta_5 = 8$, $\beta_6 = 8$, $\beta_{7,s} = \frac{16}{s}$.

    To estimate $|A_1|$, we estimate the number of involutions in $F$. An involution in $F$ is either an involution in $E$ or an element $ec=e^3c^3$, where $e \in E, c \in U$ and $|e|=|c|=4$. Note that there are at most two $c \in U$ of order 4 since $U$ is cyclic by Theorem~\ref{Strofprimitiveprime}. There are at most 271 involutions and at most $240$ elements of order $4$ in $E$, thus $|A_1| \le  271 +\frac{240\cdot 2}{2}-1=510$ and we set $a_1 = 510$.

    Let $\langle x \rangle \in A_2$. Then $xF \in \PC(A / F, E / Z, 2, 6)$. By Lemma~\ref{BdSCRSp}(4), it follows that $\NPC(A / F, E / Z, 2, 6) \le 90$. By Lemma~\ref{basiccounting}(3), it follows that $\NEP_2(xF) \cap A_2 \le 2^2 \cdot 2$. Thus we set $a_2 = 90 \cdot 2^2 \cdot 2$.

    As in the previous case, for $\langle x \rangle \in A_3$ we have $xF \in PC(A / F, E / Z, 2, 4)$, $NPC(A / F, E / Z, 2, 4) \le 513$ by Lemma~\ref{BdSCRSp}(4) and $\NEP_2(xF) \cap A_3 \le 2^4 \cdot 2$ by Lemma~\ref{basiccounting}(3), thus we set $a_3 = 513 \cdot 2^4 \cdot 2$.

    For $\langle x \rangle \in A_3'$ we have $xF \in \PC(A / F, E / Z, 2, 2)$ and since $NPC(A / F, E / Z, 2, 2) =0$ by Lemma~\ref{BdSCRSp}(4), it follows that $A_3' = \varnothing$.

    Let $\langle x \rangle \in A_4$. Then $xF \in \EP_2(A / F)$. By Lemma~\ref{BdSCRSp}(4), it follows that $\NEP_2(A / F) \le 12 + 90 + 324 + 513 = 939$. Obviously, $|\bC_{E / Z} (x)| \le |E / Z| = 2^8$. By Lemma~\ref{basiccounting}(2), it follows that $\NEP_2(xF) \le 2^8\cdot 2$. Thus we set $a_4 = 939 \cdot 2^8 \cdot 2$.

    Let $\langle x \rangle \in A_5,\ o(x) = s$. As in the previous case, we have $xF \in \EP(A / F),\ \NEP(A / F) \le 1883$ by Lemma~\ref{BdSCRSp}(4) and $\NEP_s(xF) \le 2^8 \cdot s$ by Lemma~\ref{basiccounting}(2). Since the largest prime dividing the order of $\Sp_8(2)$ is 17, we set $a_5 = 1883\cdot 2^8 \cdot 17$.

    Let $\langle x \rangle \in A_6$. Denote the natural homomorphism from $G$ to $G / U = \overline{G}$ by $\overline{\phantom{x}}$. Then $\overline{x} = xU \in \NEP_2(\overline{G} \backslash \overline{A})$ and $\overline{x} \overline{A} \in \NEP_2(\overline{G} / \overline{A})$. By Theorem ~\ref{Strofprimitiveprime}, it follows that there is at most one subgroup of order 2 in the group $G / A \simeq \overline{G} / \overline{A}$, thus
    \[
        |A_6| \le 1 \cdot \NEP_2(\overline{x}\overline{A}) \cdot \NEP_2(xU) \le |\overline{A}| \cdot \NEP_2(xU) \le |A / F| \cdot |F / U| \cdot \NEP_2(xU)
        .\]

    By Lemma~\ref{counttop2u}, it follows that $\NEP_2(xU) \le |W|^{\frac{1}{2}} + 1$, thus we set $a_6 = 6^4 \cdot 24 \cdot 2^8 \cdot (|W|^{\frac{1}{2}} + 1)$.

    By analogy to the previous case, we set $a_{7,s} = 6^4 \cdot 24 \cdot 2^8 \cdot \frac{|W|-1}{|W|^{\frac{1}{s}-1}}$.

    To estimate the sum of summands in $\star$ corresponding to subgroups in $A_7$, first, note the following relations (the last one holds for $s \ge 3$)
    \begin{gather*}
        \frac{|W| - 1}{|W|^{\frac{1}{s}} - 1} (|W|^{\frac{16b}{s}}-1) = \frac{|W|^{\frac{16b}{s}}-1}{ |W|^{\frac{1}{s}}-1} (|W|-1) =
        \\
        = (1 + |W|^{\frac{1}{s}} + \ldots + |W|^{\frac{16b-1}{s}})(|W|-1) \\
        \le 16b |W|^{\frac{16b-1}{s}}(|W|-1)
        \le 16b |W|^{\frac{16b-1}{3}}(|W|-1).
    \end{gather*}
    Second, note that $s$ divides $|G : A|$, which divides $\dim (W)$ by Theorem~\ref{Strofprimitiveprime}(6). This implies that there are at most $\Div(\dim (W))$ possibilities for $s$.
    Thus
    \[
        \sum\limits_{s \ge 3} a_{7,s} (|W|^{\beta_{7,s}} - 1) \le \Div (\dim W) \cdot 6^4 \cdot 24 \cdot 2^8 \cdot 16b \cdot |W|^{\frac{16b-1}{3}} (|W| - 1)
        .\]

    \bigskip

    In the case $|W|$ is prime, sets $A_{7,s}$ and $A_6$ are empty and we set $a_6 = a_{7,s} = 0$.

    It is routine to check that $\star$ is satisfied when $|W| \geq 7$ or $|W|\geq 3$ and $b \ge  2$. \\

    Let $e=9$.
    Thus in the notations of Theorem~\ref{Strofprimitiveprime} $|F / U| = |E / Z| = 3^4, r = 3, 2n = 4, 3 \mid |W|-1$ and $A/F \in \SCRSp(4,3)$. By Lemma~\ref{BdSCRSp}(6), it follows that $|A/F| \leq 24^2 \cdot 2$.

    Define $A_i$-s in the following way:
    \begin{enumerate}
        \item $A_1=\{\langle x \rangle \ |\ x \in \EP_3(F \backslash U) \}$,
        \item $A_2=\{\langle x \rangle \ |\ x \in \EP_2(A \backslash F) \}$,
        \item $A_3=\{\langle x \rangle \ |\ x \in \EP_3(A \backslash F)$ and $|\bC_V(x)| \geq |W|^{6b}\}$,
        \item $A_4=\{\langle x \rangle \ |\ x \in \EP_3(A \backslash F)$ and $|\bC_V(x)| \leq |W|^{5b}\}$,
        \item $A_5 = \{ \langle x \rangle \ |\ x \in \EP_5(A \backslash F) \},$
        \item $A_6=\{\langle x \rangle \ |\ x \in \EP_2(G \backslash A) \}$,
        \item $A_{7,s} = \left\{ \langle x \rangle  \mid x \in \EP_s(G / A) \right\}$ and $A_7 = \bigcup\limits_{s \ge 3} A_{6,s}$.
    \end{enumerate}

    We set $\beta_i$ as follows:
    \begin{enumerate}
        \item $\beta_1 = 3$ (see Lemma~\ref{roughestimate}(1)),
        \item $\beta_2 = 6$ (see Lemma~\ref{roughestimate}(2)),
        \item  Let $\langle x \rangle \in A_3$. Then $x$ is a good element and $\bC_E(x) = 3^3$. Indeed, suppose $x$ is a bad element. Then Lemma~\ref{mygood}(1) implies $|\bC_V(x)|\le |W|^{3b}$ hence $x \not\in A_3$, contradiction. Thus $x$ is a good element. Since $x \not\in F$, it follows that $|\bC_E(x)|\le 3^3$. Suppose that $|\bC_E(x)| \le  3^2$. Then Lemma~\ref{mygood}(2) implies $|\bC_V(x)|\le |W|^{5b}$ hence $x \not\in A_3$, contradiction. Thus $|\bC_E(x)| = 3^3$ and Lemma~\ref{mygood}(2) implies $|\bC_V(x)|\le |W|^{6b}$, thus $|\bC_V(x)| = |W|^{6b}$ and we set $\beta_3 = 6$.
        \item $\beta_4 = 5$ (see definition of $A_4$),
        \item $\beta_5 = 4$ (see Lemma~\ref{roughestimate}(3)),
        \item $\beta_6 = 4.5$ (see Lemma~\ref{roughestimate}(5)),
        \item $\beta_{7,s} = \frac{9}{s}$ (see Lemma~\ref{roughestimate}(5)).
    \end{enumerate}

    Each subgroup in $A_1$ is generated by an element of order 3 of $E$.
    There are at most $242$ elements of order $3$ in $E$, so there are at most $\frac{242}{2}$ subgroups in $A_1$ and we set $a_1 = 121$.

    We have $|A_2| \le  95 \cdot 3^4 \cdot 2$ by Lemma ~\ref{BdSCRSp}(6) and Lemma ~\ref{basiccounting}(2) (see the case $A_4$ for $e=16$). Thus we set $a_2 = 95 \cdot 3^4 \cdot 2$

    Let $x \in \EP_3(A \backslash F)$ s.~t. $\langle x \rangle \in A_3$. As we mentioned before, $\bC_V(x)|=|W|^{6b}$.
    Let $z \in U, |z| = 3$. By Lemma~\ref{decomptriv}, there is a decomposition $V = \bC_V(x) \oplus \bC_V(xz) \oplus \bC_V(xz^2)$, thus for both $y = xz$ and $ y = xz^2$ we have $|\bC_V(y)| \le |W|^{9b-6b}$ hence $y \not\in A_3$.
    Remind that $\bC_E(x) = 3^3$, thus $xF \in \PC(A / F, E / Z,3,3)$.
    By Lemma ~\ref{BdSCRSp}(6) and Lemma ~\ref{preresult}, it follows that $\NPC(A / F, E / Z,3,3) \le 8 \cdot 2=16$.
    By Lemma ~\ref{basiccounting}(2), it follows that $\NEP_3(xF) \le 3^5$.
    Clearly, $xz, xz^2 \in xF$ and $\langle x \rangle = \langle x^2 \rangle$, thus we set $a_3 = \frac{1}{3} \cdot  \frac{1}{2}\cdot 16 \cdot 3^5$.

    We have $|A_4|  \leq 95 \cdot 3^5 /2$ by Lemma ~\ref{BdSCRSp}(6) and Lemma ~\ref{basiccounting}(2) (see the case $A_4$ for $e=16$). Thus we set $a_4 = \leq 95 \cdot 3^5 /2$.

    We have $|A_5| \le 64\cdot 5 \cdot 3^4 / 4$ by Lemma ~\ref{BdSCRSp}(6) and Lemma ~\ref{basiccounting}(2) (see the case $A_4$ for $e=16$). Thus we set  $a_5 = 64 \cdot 5 \cdot 3^4 /4$.

    By analogy with the case $A_6$ for $e = 16$, we have $|A_6| \le  24^2 \cdot 2 \cdot  3^4 \cdot (|W|^{\frac{1}{2}} + 1)$ and $|A_{7,s}| \le  24^2 \cdot 2 \cdot 3^4 \cdot \frac{|W|-1}{|W|^{\frac{1}{s}} - 1}$ by Lemma~\ref{BdSCRSp}(6) and Lemma~\ref{counttop2u}. Thus we set $a_6 = 24^2 \cdot 2 \cdot  3^4 \cdot (|W|^{\frac{1}{2}} + 1)$ and $a_{7,s} = 24^2 \cdot 2 \cdot 3^4 \cdot \frac{|W|-1}{|W|^{\frac{1}{s}} - 1}$. As in the case $e = 16$, we obtain the following estimate for the summands in $\star$ corresponding to subgroups in $A_7$:
    \[
        \sum\limits_{s \ge 3} a_{7,s} (|W|^{\beta_{7,s}} - 1) \le \Div (\dim W) \cdot 9 b \cdot 24^2 \cdot 2 \cdot 3^4 \cdot |W|^{\frac{9b-1}{3}} (|W| - 1)
        .\]

    It is routine to check that $\star$ is satisfied when $|W| \geq 31$.
    Also $\star$ is satisfied if $|W|\geq 7$ and $b \geq 2$ or $|W| \ge 4$ and $b \ge 3$. \\

    Let $e=8$. Thus in the notations of Theorem~\ref{Strofprimitiveprime} $|F / U| = |E / Z| = 2^6,\ r = 2,\ 2n = 6,\ 2 \mid |W|-1$ and $A/F \in \SCRSp(6,2)$. By Lemma~\ref{BdSCRSp}(3), it follows that $|A/F| \leq 6^4$.

    Define $A_i$-s in the following way:
    \begin{enumerate}
        \item $A_1=\{\langle x \rangle\ |\ x \in \EP_2(F \backslash U)\}$,
        \item $A_2=\{\langle x \rangle\ |\ x \in \EP_2(A \backslash F)$, $x$ is a good element and $\bC_{\bar E}(x)=2^4 \}$,
        \item $A_3=\{\langle x \rangle\ |\ x \in \EP_2(A \backslash F)$ and $x$ is bad$\}$,
        \item $A_4=\{\langle x \rangle\ |\ x \in \EP_s(A \backslash F)$ for all primes $s \geq 3\}$,
        \item $A_5=\{\langle x \rangle\ |\ x \in \EP_2(G \backslash A)\}$,
        \item $A_{6,s} = \left\{ \langle x \rangle  \mid x \in \EP_s(G \backslash A),\ s\ge 3 \right\}$ and $A_6 = \bigcup\limits_{s \ge 3} A_{6,s}$.
    \end{enumerate}

    We set $\beta_i$ as follows:
    \begin{enumerate}
        \item $\beta_1 = 4$ (see Lemma~\ref{roughestimate}(1)),
        \item $\beta_2 = 6$ (see Lemma~\ref{mygood}(2)),
        \item $\beta_3 = 4$ (see Lemma~\ref{mygood}(3)),
        \item $\beta_4 = 4$ (see Lemma~\ref{roughestimate}(3)),
        \item $\beta_5 = 4$ (see Lemma~\ref{roughestimate}(5)),
        \item $\beta_{6,s} = \frac{8}{s}$ (see Lemma~\ref{roughestimate}(5)).
    \end{enumerate}

    By analogy to the case $A_1$ for $e=16$, we set $a_1 = 71+56-1 = 126$.

    By analogy to the case $A_3$ for $e=16$, we have $|A_2| \le  45 \cdot 2^3 \cdot 2$ by Lemma ~\ref{BdSCRSp}(3) and Lemma ~\ref{basiccounting}(3).
    Thus we set $a_2 = 45 \cdot 2^3 \cdot 2$.
    Let $\langle x \rangle \in A_2,\ z \in U,\ |z| =2$. Clearly, $\langle xz \rangle$ is in $A_2$ too.
    By Lemma~\ref{decomptriv}, we have $V = \bC_V(x) \oplus \bC_V(xz)$ and the worst possible case is $|\bC_V(x)| = |W|^{6b}$ and $|\bC_V(xz)|=|W|^{2b}$.
    Thus $A_2$ can be split into two sets $A_{21}$ and $A_{22}$, and we set $\beta_{21} = 6, \beta_{22} = 2$ and $a_{21} = a_{22} = \frac{a_2}{2} = 45\cdot 2^3$.

    By analogy to the case $A_4$ for $e=16$ we have $a_4 \le  135\cdot 2^6\cdot 2$ by Lemma ~\ref{BdSCRSp}(3) and Lemma ~\ref{basiccounting}(2), thus we set $a_4 = 135\cdot 2^6\cdot 2$.

    By analogy to the case $A_5$ for $e = 16$, since 7 is the largest prime dividing $|\Sp_6(2)|$, it follows that $|A_4| \le 248 \cdot 2^6 \cdot 7 \cdot \frac{1}{2}$ by Lemma ~\ref{BdSCRSp}(3) and Lemma ~\ref{basiccounting}(2) and we set $a_4 =  248 \cdot 2^6 \cdot 7 \cdot \frac{1}{2}$.

    In the same way as in the case $A_6$ for $e=16$, we have $|A_5| \le 6^4 \cdot 2^6 \cdot (|W|^{\frac{1}{2}}+1)$ and $|A_{6,s}| \le  6^4 \cdot 2^6 \frac{|W| - 1}{|W|^{\frac{1}{s}} - 1}$ by Lemma ~\ref{BdSCRSp}(3) and Lemma ~\ref{counttop2u}, thus we set $a_5 = 6^4 \cdot 2^6 \cdot (|W|^{\frac{1}{2}}+1)$ and $a_{6,s} \le  6^4 \cdot 2^6 \frac{|W| - 1}{|W|^{\frac{1}{s}} - 1}$

    By analogy with the case $e=16$ we obtain the following estimate for summands in $\star$ corresponding to subgroups in $A_6$:
    \[
        \sum_{s \ge 3} a_{6,s} (|W|^{\beta_{6,s}} - 1) \le  \Div(\dim W) \cdot 8b \cdot 6^4 \cdot 2^6 \cdot |W|^{\frac{8b-1}{3}} \cdot (|W|-1)
        .\]

    It is routine to check that $\star$ is satisfied when $|W| \geq 23$ and $|W|$ is a prime or when $|W| \geq 27$ and $|W|$ is a prime power. Also $\star$ is satisfied if $|W|\geq 7$ and $b \geq 2$ or $|W|\ge 4$ and $b \ge 3$ or $|W| \ge 3$ and $b \ge 4$. \\

    Let $e=4$.
    Thus in the notations of Theorem~\ref{Strofprimitiveprime} $|F / U| = |E / Z| = 2^4,\ r=2,\ 2n = 4,\ 2 \mid |W|-1$ and $A/F \in \SCRSp(4,2)$. By Lemma ~\ref{BdSCRSp}(2), it follows that $|A/F| \leq 6^2 \cdot 2$.

    Define $A_i$-s in the following way:
    \begin{enumerate}
        \item $A_1=\{\langle x \rangle\ |\ x \in \EP_2(F \backslash U)\}$,
        \item $A_2=\{\langle x \rangle\ |\ x \in \EP_2(A \backslash F)$, $x$ is a good element and $\bC_{\bar E}(x)=2^2 \}$,
        \item $A_3=\{\langle x \rangle\ |\ x \in \EP_2(A \backslash F)$ and $x$ is bad$\}$,
        \item $A_4=\{\langle x \rangle\ |\ x \in \EP_{\{3,5\}}(A \backslash F)\}$,
        \item $A_5=\{\langle x \rangle\ |\ x \in \EP_2(G \backslash A)\}$,
        \item $A_{6,s} = \left\{ \langle x \rangle  \mid x \in \EP_s(G \backslash A)\right\}$ and $A_6 = \bigcup_{s \ge 3} A_{6,s}$.
    \end{enumerate}

    We set $\beta_i$ as follows:
    \begin{enumerate}
        \item $\beta_1 = 2$ (see Lemma~\ref{roughestimate}(1)),
        \item $\beta_2 = 3$ (see Lemma~\ref{mygood}(2)),
        \item $\beta_3 = 2$ (see Lemma~\ref{mygood}(1)),
        \item $\beta_4 = 2$ (see Lemma~\ref{roughestimate}(3)),
        \item $\beta_5 = 2$ (see Lemma~\ref{roughestimate}(5)),
        \item $\beta_{6,s}$ (see Lemma~\ref{roughestimate}(5)).
    \end{enumerate}

    By analogy to the case $A_1$ for $e=16$, we set $a_1 = 30$.

    We have $|A_2| \le \NPC(A / F, E / Z,2,2) \cdot \max_{xF \in A / F} \NEP_2(xF) \le 15 \cdot 2^2 \cdot 2$ by Lemma ~\ref{BdSCRSp}(2) and Lemma ~\ref{basiccounting}(2), thus we set $a_2 = 15 \cdot 2^2 \cdot 2$
    Let $\langle x \rangle  \in A_2$.
    By analogy with the case $A_2$ ($e=8$), we have decomposition $V = \bC_V(x)\oplus \bC_V(xz)$ for $z \in U, o(z)=2$. Thus $A_2$ can be split into two sets $A_{21}$, $A_{22}$ with corresponding $\beta_{21} = 3, \beta_{22} = 1, a_{21}=a_{22}= \frac{a_2}{2} = 15\cdot 2^2$.

    By analogy to the case $A_4$ for $e=16$, we have $|A_3| \le  21 \cdot 2^4 \cdot 2$ by Lemma ~\ref{BdSCRSp}(2) and Lemma ~\ref{basiccounting}(2) and thus we set $a_3 = 21 \cdot 2^4 \cdot 2$

    By analogy to the case $A_4$ for $e=8$, we have $|A_4| \le 8 \cdot 2^4 \cdot 3 \cdot \frac{1}{2} + 4 \cdot 2^4 \cdot 5 \cdot \frac{1}{4}$ by Lemma ~\ref{BdSCRSp}(2) and Lemma ~\ref{basiccounting}(3), thus we set $a_4 = 17 \cdot 2^4$.

    By analogy to the case $A_6$ for $e=16$, we have $|A_5| \le 6^2 \cdot 2 \cdot 2^4 \cdot (|W|^{\frac{1}{2}}+1)$ and $|A_{6,s}| \le  6^2 \cdot 2 \cdot 2^4 \cdot \frac{|W|-1}{|W|^{\frac{1}{s}} - 1}$ by Lemma ~\ref{BdSCRSp}(2) and Lemma~\ref{counttop2u}. Thus we set $a_5 = 6^2 \cdot 2 \cdot 2^4 \cdot (|W|^{\frac{1}{2}}+1)$ and $a_{6,s} = 6^2 \cdot 2 \cdot 2^4 \cdot \frac{|W|-1}{|W|^{\frac{1}{s}} - 1}$.

    In the same way as in the case for $e=16$, we obtain the following estimate for the summands in $\star$ corresponding to sungroups in $A_6$:
    \[
        \sum_{s \ge 3} a_{6,s} (|W|^{\beta_{6,s}} - 1) \le  \Div(\dim W) \cdot 4b \cdot 6^2 \cdot 2 \cdot  2^4 \cdot |W|^{\frac{4b-1}{3}} \cdot (|W|-1)
        .\]

    It is routine to check that $\star$ is satisfied when $|W| \geq 79$ and $|W|$ is a prime or $|W| \geq 281$ and $|W|$ is a prime power. Also $\star$ is satisfied if $b \geq 2$ and $|W|\geq 13$ or $b \ge 3$ and $|W| \ge 5$ or $b \ge 5$ and $|W| \ge 3$. \\

    Let $e=3$.
    Thus in the notations of Theorem~\ref{Strofprimitiveprime} $|F / U| = |E / Z| = 3^2,\ r = 3, 2n = 2,\ 3 \mid |W|-1$, $A/F \lesssim \SL(2,3)$ and $|A/F| \leq 24$ by Lemma ~\ref{BdSCRSp}(5).

    Define $A_i$ and set $\beta_i$ as follows:
    \begin{enumerate}
        \item $A_1=\{\langle x \rangle\ |\ x \in \EP_3(F \backslash U)\}$, $\beta_1 = 1$ (see Lemma~\ref{roughestimate}(1)),
        \item $A_2=\{\langle x \rangle\ |\ x \in \EP_2(A \backslash F)\}$, $\beta_2 = 2$ (see Lemma~\ref{roughestimate}(2)),
        \item $A_3=\{\langle x \rangle\ |\ x \in \EP_3(A \backslash F)\}$ (below we split $A_3$ into three sets and set betas for them),
        \item $A_4=\{\langle x \rangle\ |\ x \in \EP_2(G \backslash A)\}$, $\beta_4 = 1.5$ (see Lemma~\ref{roughestimate}(5)),
        \item $A_5 = \left\{ \langle x \rangle  \mid x \in \EP_3(G \backslash A) \right\}$, $\beta_5 = 1$ (see Lemma~\ref{roughestimate}(5)),
        \item $A_{6,s} = \left\{ \langle x \rangle  \mid x \in \EP_s(G \backslash A) \right\}$ and $A_6 = \bigcup\limits_{s \ge 5} A_{6,s}$, $\beta_{6,s} = \frac{3}{s}$ (see Lemma~\ref{roughestimate}).
    \end{enumerate}

    By analogy to the case $A_1$ for $e=9$, we have $|A_1| \le  13$ since $\NEP_3(E)\le 26$ and we set $a_1=13$.

    In the same way as in the case $A_2$ for $e=9$, we have $|A_2|\le  1 \cdot 3^2 \cdot 2$ by Lemma ~\ref{BdSCRSp}(5) and Lemma ~\ref{basiccounting}(2) and we set $a_2 = 1 \cdot 3^2 \cdot 2$.

    Let $\langle x \rangle \in A_3$ and $z \in U, |z| = 3$.
    Since $z$ centralizes A, it follows that both $x$, $xz$ and $xz^2$ are good or they both are bad.
    If $x$ is a bad element, then $|\bC_V(x)| \leq |W|^{b}$ by Lemma ~\ref{mygood}(1) and the same holds for $xz$ and $xz^2$.
    Let $x$ be a good element.
    Since $x \not\in F$, it follows that $|\bC_{\bar{E}}(x)| \le 3$ and $|\bC_V(x)| \leq |W|^{2b}$ by Lemma ~\ref{mygood}(2).
    By Lemma~\ref{decomptriv}, we have decomposition $V = \bC_V(x) \oplus \bC_V(xz) \oplus \bC_V(xz^2)$ and the worst case is $|\bC_V(x)| = |W|^{2b}, |\bC_V(x)|=|W|^b$ and $|\bC_V(x)|=1$.
    Thus we split $A_3$  into three sets $A_{31}, A_{32}$ and $A_{33}$ with $\beta_{31} = 2, \beta_{32} = 1, \beta_{33} = 0$ and $a_{31} = a_{32} = a_{33} = 8\cdot \frac{3^2}{2} \cdot \frac{1}{3}$ (see Lemma ~\ref{BdSCRSp}(5) and Lemma ~\ref{basiccounting}(2)).

    By analogy with the case $A_6$ for $e=16$, we have $|A_4| \le  24\cdot 3^2 \cdot (|W|^{\frac{1}{2}}+1)$ by Lemma ~\ref{counttop2u},
    thus we set $a_4 = 24 \cdot 3^2 \cdot (|W|^{\frac{1}{2}}+1)$.

    By analogy to the previous case, we have $|A_5| \le 24 \cdot 3^2 \cdot (|W|^{\frac{2}{3}} + |W|^{\frac{1}{3}} + 1)$
    and we set $a_5 = 24 \cdot 3^2 \cdot (|W|^{\frac{2}{3}} + |W|^{\frac{1}{3}} + 1)$.

    In the same way as in the case $A_7$ for $e=16$, we have $|A_{6,s}| \le 24\cdot 3^2 \cdot  \frac{|W|-1}{|W|^{\frac{1}{s}} - 1}$ by Lemma ~\ref{BdSCRSp}(5), Lemma ~\ref{counttop2u}.
    Thus we set $a_{6,s} = 24\cdot 3^2 \cdot \frac{|W|-1}{|W|^{\frac{1}{s}} - 1}$.
    Obtain the following estimate for corresponding summands in $\star$:
    \[
        \sum_{s \ge 5} a_{6,s} (|W|^{\beta_{6,s}} - 1) \le  \Div(\dim W) \cdot 3b \cdot 24 \cdot 3^2 \cdot |W|^{\frac{3b-1}{5}} \cdot (|W|-1)
        .\]

    It is routine to check that $\star$ is satisfied when $|W| \geq 31$ and $|W|$ is prime or $|W|\geq 439$ and $|W|$ is a prime power. Also $\star$ is satisfied if $|W|\geq 13$ and $b \geq 2$ or $|W|  \ge 7$ and $b\ge 3$ or $|W| \ge 4$ and $b \ge 5$. \\

    Let $e=2$.
    Thus in the notations of Theorem~\ref{Strofprimitiveprime} $|F / U|=|E / Z| = 2^2,\ r =2, 2n = 2,\ 2 \mid |W|-1$, $A/F \lesssim S_3$ and $|A/F| \leq 6$ by Lemma ~\ref{BdSCRSp}(1).

    Define $A_i$ and set $\beta_i$ as follows:
    \begin{enumerate}
        \item $A_1=\{\langle x \rangle\ |\ x \in \EP_2(F \backslash U)\}$, $\beta_1=1$ (see Lemma ~\ref{roughestimate}(1)),
        \item $A_2=\{\langle x \rangle\ |\ x \in \EP_2(A \backslash F)\}$, $\beta_2 = 1$ (see Lemma~\ref{mygood}(1), Lemma~\ref{mygood}(2) and the case $A_3$ for $e=3$),
        \item $A_3=\{\langle x \rangle\ |\ x \in \EP_3(A \backslash F)\}$, $\beta_3 = 1$ (see Lemma ~\ref{roughestimate}(3)),
        \item $A_4=\{\langle x \rangle\ |\ x \in \EP_2(G \backslash A)\}$, $\beta_4 = 1$ (see Lemma ~\ref{roughestimate}(5)),
        \item $A_5 = \left\{ \langle x \rangle  \mid x \in \EP_3(G \backslash A) \right\} $, $\beta_5 = \frac{2}{3}$ (see Lemma ~\ref{roughestimate}(5)),
        \item $A_{6,s} = \{ \langle x \rangle   \mid x \in \EP_s(G \backslash A)\}$ and $A_6 = \bigcup\limits_{s \ge 5} A_{6,s}$, $\beta_{6,s} = \frac{2}{s}$ (see Lemma ~\ref{roughestimate}(5)).
    \end{enumerate}

    By analogy to the cases $A_1$ for other even $e$, we have $|A_1| \le 6$ since $\NEP_2(E) \le 7$ and we set $a_1 = 6$.

    By analogy to the cases $A_2,\ A_4$ for $e = 16$, we have $|A_2| \le 3\cdot 4$ by Lemma ~\ref{BdSCRSp}(1) and Lemma ~\ref{basiccounting}(2) and we set $a_2=3\cdot 4$.

    In the same way as in the case $A_5$ for $e=16$, we have $|A_3| \le 2\cdot 4\cdot \frac{3}{2}$ by Lemma ~\ref{BdSCRSp}(1) and Lemma ~\ref{basiccounting}(2) and we set $a_3 = 2\cdot 4\cdot \frac{3}{2}$.

    We have $|A_4| \le 6 \cdot 2^2 \cdot (|W|^{\frac{1}{2}} + 1)$ by Lemma ~\ref{counttop2u},
    thus we set $ a_4=6 \cdot 2^2 \cdot (|W|^{\frac{1}{2}} + 1)$.

    As in the previous case, we have $|A_5| \le 6 \cdot 2^2 \cdot (|W|^\frac{2}{3} + |W|^\frac{1}{3} + 1)$,
    thus we set $a_5 = 6 \cdot 2^2 \cdot (|W|^\frac{2}{3} + |W|^\frac{1}{3} + 1)$.

    In the same way as in the case $A_6$ for $e=3$, we have $|A_{6,s}| \le 6 \cdot 2^2 \cdot \frac{|W|-1}{|W|^{\frac{1}{s}} - 1}$ and we set $a_{6,s}  = 6 \cdot 2^2 \cdot  \frac{|W|-1}{|W|^{\frac{1}{s}} - 1}$. We obtain the following estimate:
    \[
        \sum_{s \ge 3} a_{6,s} (|W|^{\beta_{6,s}} - 1) \le  \Div(\dim W) \cdot 2b \cdot 6 \cdot 2^2 \cdot  |W|^{\frac{2b-1}{5}} \cdot (|W|-1)
        .\]

    It is routine to check that $\star$ is satisfied when $|W| \geq 31$ and $|W|$ is a prime, or when $|W| \geq 5693$ and $|W|$ is a prime power. Also if $b \geq 2\ (3,4$ or $5)$, $\star$ is satisfied when $|W| \geq 19\ (7,5$ or $3$ respectively$)$.

\end{proof}

\begin{tab}{List of primitive solvable groups which possibly do not have regular orbit}\label{exception}
    \centering
    \begin{tabular}{|c|c|c|c|c|}
        \hline
        No. & $e$ & $p$ & $d$ & $a$ \\
        \hline
        1   & 16  & 3   & 16  & 1   \\
        2   & 16  & 5   & 16  & 1   \\ \hline
        3   & 9   & 2   & 18  & 2   \\
        4   & 9   & 7   & 9   & 1   \\
        5   & 9   & 13  & 9   & 1   \\
        6   & 9   & 2   & 36  & 4   \\
        7   & 9   & 19  & 9   & 1   \\
        8   & 9   & 5   & 18  & 2   \\ \hline
        9   & 8   & 3   & 8   & 1   \\
        10  & 8   & 5   & 8   & 1   \\
        11  & 8   & 7   & 8   & 1   \\
        12  & 8   & 3   & 16  & 2   \\
        13  & 8   & 11  & 8   & 1   \\
        14  & 8   & 13  & 8   & 1   \\
        15  & 8   & 17  & 8   & 1   \\
        16  & 8   & 19  & 8   & 1   \\
        17  & 8   & 5   & 16  & 2   \\
        18  & 8   & 3   & 24  & 3   \\ \hline
        19  & 4   & 3   & 4   & 1   \\
        20  & 4   & 5   & 4   & 1   \\
        21  & 4   & 7   & 4   & 1   \\
        22  & 4   & 3   & 8   & 2   \\
        23  & 4   & 11  & 4   & 1   \\
        24  & 4   & 13  & 4   & 1   \\
        25  & 4   & 17  & 4   & 1   \\
        26  & 4   & 19  & 4   & 1   \\
        \hline
    \end{tabular}
    \quad
    \begin{tabular}{|c|c|c|c|c|}
        \hline
        No. & $e$ & $p$ & $d$ & $a$ \\
        \hline
        27  & 4   & 23  & 4   & 1   \\
        28  & 4   & 5   & 8   & 2   \\
        29  & 4   & 3   & 12  & 3   \\
        30  & 4   & 29  & 4   & 1   \\
        31  & 4   & 31  & 4   & 1   \\
        32  & 4   & 37  & 4   & 1   \\
        33  & 4   & 41  & 4   & 1   \\
        34  & 4   & 43  & 4   & 1   \\
        35  & 4   & 47  & 4   & 1   \\
        36  & 4   & 7   & 8   & 2   \\
        37  & 4   & 53  & 4   & 1   \\
        38  & 4   & 59  & 4   & 1   \\
        39  & 4   & 61  & 4   & 1   \\
        40  & 4   & 67  & 4   & 1   \\
        41  & 4   & 71  & 4   & 1   \\
        42  & 4   & 73  & 4   & 1   \\
        43  & 4   & 3   & 16  & 4   \\
        44  & 4   & 11  & 8   & 2   \\
        45  & 4   & 5   & 12  & 3   \\
        46  & 4   & 13  & 8   & 2   \\
        47  & 4   & 3   & 20  & 5   \\ \hline
        48  & 3   & 2   & 6   & 2   \\
        49  & 3   & 7   & 3   & 1   \\
        50  & 3   & 13  & 3   & 1   \\
        51  & 3   & 2   & 12  & 4   \\
        52  & 3   & 19  & 3   & 1   \\
        \hline
    \end{tabular}
    \quad
    \begin{tabular}{|c|c|c|c|c|}
        \hline
        No. & $e$ & $p$ & $d$ & $a$ \\
        \hline
        53  & 3   & 5   & 6   & 2   \\
        54  & 3   & 7   & 6   & 2   \\
        55  & 3   & 2   & 18  & 6   \\
        56  & 3   & 11  & 6   & 2   \\
        57  & 3   & 13  & 6   & 2   \\
        58  & 3   & 2   & 24  & 8   \\
        59  & 3   & 17  & 6   & 2   \\
        60  & 3   & 7   & 9   & 3   \\
        61  & 3   & 19  & 6   & 2   \\ \hline
        62  & 2   & 3   & 2   & 1   \\
        63  & 2   & 5   & 2   & 1   \\
        64  & 2   & 7   & 2   & 1   \\
        65  & 2   & 3   & 4   & 2   \\
        66  & 2   & 11  & 2   & 1   \\
        67  & 2   & 13  & 2   & 1   \\
        68  & 2   & 17  & 2   & 1   \\
        69  & 2   & 19  & 2   & 1   \\
        70  & 2   & 23  & 2   & 1   \\
        71  & 2   & 5   & 4   & 2   \\
        72  & 2   & 3   & 6   & 3   \\
        73  & 2   & 29  & 2   & 1   \\
        74  & 2   & 7   & 4   & 2   \\
        75  & 2   & 3   & 8   & 4   \\
        76  & 2   & 11  & 4   & 2   \\
        77  & 2   & 5   & 6   & 3   \\
        78  & 2   & 13  & 4   & 2   \\
        \hline
    \end{tabular}
    \quad
    \begin{tabular}{|c|c|c|c|c|}
        \hline
        No. & $e$ & $p$ & $d$ & $a$ \\
        \hline
        79  & 2   & 3   & 10  & 5   \\
        80  & 2   & 17  & 4   & 2   \\
        81  & 2   & 7   & 6   & 3   \\
        82  & 2   & 19  & 4   & 2   \\
        83  & 2   & 23  & 4   & 2   \\
        84  & 2   & 5   & 8   & 4   \\
        85  & 2   & 3   & 12  & 6   \\
        86  & 2   & 29  & 4   & 2   \\
        87  & 2   & 31  & 4   & 2   \\
        88  & 2   & 11  & 6   & 3   \\
        89  & 2   & 37  & 4   & 2   \\
        90  & 2   & 41  & 4   & 2   \\
        91  & 2   & 43  & 4   & 2   \\
        92  & 2   & 3   & 14  & 7   \\
        93  & 2   & 13  & 6   & 3   \\
        94  & 2   & 47  & 4   & 2   \\
        95  & 2   & 7   & 8   & 4   \\
        96  & 2   & 53  & 4   & 2   \\
        97  & 2   & 5   & 10  & 5   \\
        98  & 2   & 59  & 4   & 2   \\
        99  & 2   & 61  & 4   & 2   \\
        100 & 2   & 67  & 4   & 2   \\
        101 & 2   & 17  & 6   & 3   \\
        102 & 2   & 71  & 4   & 2   \\
        103 & 2   & 73  & 4   & 2   \\
        \hline
    \end{tabular}
\end{tab}

\begin{cor}\label{except}
    Assume that $G$ is a solvable irreducible primitive subgroup of $\GL(d,p)$. Suppose that the parameters of $G$ are $e, a$ in the notations of Theorem~{\em \ref{Strofprimitiveprime}}, where $e \in \{2,3,4,8,9,16\}$ and $e\cdot a=d$. Suppose that $G$ is not mentioned in Table~{\em \ref{exception}}. Then $G$ has a regular orbit on $V = \FF_p^d$.
\end{cor}

\section{Acknowledgement} \label{sec:Acknowledgement}
The research of the first author  was partially supported by the NSFC (No: 11671063) and a grant from the Simons Foundation (No 499532). The second and the third author are supported by the Russian Scientific Foundation (project No 19-11-00039).

\small

\end{document}